\newtheorem{theorem}{Theorem}
\newtheorem{lemma}{Lemma}
\newtheorem{corollary}{Corollary}
\theoremstyle{remark}
\newtheorem*{remark}{Remark}
\numberwithin{equation}{section}
\begin{document}
\title[A summation of the form $\sum \zeta^{(\lowercase{n})} (\rho) X^{\rho}$]{A further generalisation of sums of higher derivatives of the Riemann Zeta Function}

\author{Andrew Pearce-Crump}
\address{Department of Mathematics, University of York, York, YO10 5DD, United Kingdom}
\email{aepc501@york.ac.uk}

\begin{abstract}
We prove an asymptotic for the sum of $\zeta^{(n)} (\rho)X^{\rho}$ where $\zeta^{(n)} (s)$ denotes the $n$th derivative of the Riemann zeta function, $X$ is a positive real and $\rho$ denotes a non-trivial zero of the Riemann zeta function. The sum is over the zeros with imaginary parts up to a height $T$, as $T \rightarrow \infty$.

We also specify what the asymptotic formula becomes when $X$ is a positive integer, highlighting the differences in the asymptotic expansions as $X$ changes its arithmetic nature.

\end{abstract}

\maketitle

\section{Introduction} \label{Introduction}
Let $\zeta(s)$ denote the Riemann zeta function and let $\rho = \beta + i\gamma$ be a non-trivial zero of $\zeta (s)$. The starting point for the motivation of this paper is Landau's theorem \cite{Landau} which states that for any $X > 1$ we have
\begin{equation}
\sum_{0< \gamma \leq T} X^{\rho} = - \frac{T}{2 \pi} \Lambda (X) + O(\log T). \label{Landau}
\end{equation}
Here and throughout this article, $\Lambda (X)$ denotes the von Mangoldt function given by
\begin{equation}
\Lambda(X) =
\begin{cases}
\log p &\text{ if $X = p^k$ for some prime $p$ and some integer $k \geq 1$}\\
0 &\text{ otherwise}.\label{vonM}
\end{cases}
\end{equation}
Formula (\ref{Landau}) is proven by estimating the integral
\[
\frac{1}{2 \pi i} \int_{R} \frac{\zeta '}{\zeta}(s) X^s \ ds
\]
for a suitably chosen rectangle enclosing those zeros $\rho$ for which $0 < \gamma \leq T$.

Gonek \cite{GonekLandau} has generalised Landau's formula to be uniform in both variables. He showed that for $X,T>1$,
\begin{align}
\sum_{0< \gamma \leq T} X^{\rho} = & - \frac{T}{2 \pi} \Lambda (X) + O(X \log (2XT) \log\log (3X)) \nonumber \\
&+ O\left(\log X \min \left(T, \frac{X}{\langle X \rangle} \right) \right) + O\left(\log T \min \left(T, \frac{1}{\log X} \right) \right) \label{Gonek}
\end{align}
where $\langle X \rangle$ denotes the distance from $X$ to the nearest prime power other than $X$ itself. Note then that (\ref{Landau}) follows from (\ref{Gonek}) provided that $X$ is fixed and $T \rightarrow \infty$.


The large number of error terms are explained in Gonek \cite{GonekLandau} which are all due to the different behaviour of the sum in different $X$ ranges. He then also notes that when $X=n \in \mathbb{N}$ and $T \gg n$, then the last two error terms of (\ref{Gonek}) are absorbed by the first error term and so (\ref{Gonek}) becomes
\begin{equation}
\sum_{0< \gamma \leq T} n^{\rho} = - \frac{T}{2 \pi} \Lambda (n) + O(n \log (2nT) \log\log (3n)). \label{KLP}
\end{equation}
The differences in formula (\ref{Gonek}) and formula (\ref{KLP}) highlights the differences in behaviour of sums of this form when $X$ is no longer an arbitrary positive real but is instead a positive integer, a theme that will reoccur several times in this paper.

Fujii has generalised Gonek's result (assuming RH) in \cite{FujiiLandau1} and \cite{FujiiLandau2} to find the sub-leading and sub-sub-leading behaviour. This is given by
\begin{equation}
\sum_{0< \gamma \leq T} X^{\frac{1}{2} + i\gamma} = - \frac{T}{2 \pi} \Lambda (X) + \frac{ X^{\frac{1}{2} + iT} \log \left( \frac{T}{2 \pi} \right)}{2 \pi i \log X} + X^{\frac{1}{2} + iT} \frac{1}{\pi} \arg \zeta \left(\frac{1}{2} + iT \right) + O \left( \frac{\log T}{(\log\log T)^2} \right). \label{Fujii1}
\end{equation}

Fujii has also shown in \cite{FujiiShanks} that
\begin{equation}
\sum_{0< \gamma \leq T} \zeta ' (\rho) = \frac{T}{4 \pi} \log^2 \left( \frac{T}{2 \pi} \right) + (-1 + C_0) \frac{T}{2 \pi} \log \left( \frac{T}{2 \pi} \right) + (1 - C_0 - C_0^2 +3 C_1) \frac{T}{2 \pi} +E(T) \label{Fujii2}
\end{equation}
where $E(T)$ is given explicitly both unconditionally and assuming RH and where $C_0$ and $C_1$ are the constants in the expansion
\[
\zeta(s) = \frac{1}{s-1} + C_0 + C_1 (s-1) + ... .
\]

Fujii forms a hybrid of the sums (\ref{Landau}) and (\ref{Fujii2}) in his paper \cite{FujiiDist}. He gives the following explicit formula for a fixed positive real number $X$,
\begin{align}
\sum_{0< \gamma \leq T} \zeta ' (\rho) X^{\rho} &= - \Delta (X) \frac{T}{2 \pi} \Bigg(\log X \left(\frac{1}{2} \log \left( \frac{T}{2 \pi} \right) - \frac{1}{2} + \frac{\pi i}{4} \right) \nonumber -  \sum_{mr=X} \Lambda (r) \log m \Bigg) \nonumber \\
& + X \sum_{m \leq \frac{T}{2 \pi X}} e^{2 \pi i m X} \log^2 m  + \frac{1}{2} X \log X \sum_{m \leq \frac{T}{2 \pi X}} e^{2 \pi i m X} \log m \nonumber \\
&- \left( \frac{1}{2} X \log^2 X - \frac{\pi i}{4} X \log X \right) \sum_{m \leq \frac{T}{2 \pi X}} e^{2 \pi i m X} - X \sum_{mr \leq \frac{T}{2 \pi X}} e^{2 \pi i mr X} \Lambda (r) \log m \nonumber \\
&+ O(\sqrt{T} \log^3 T) \label{Fujii3}
\end{align}
where $\Delta (X)$ is defined by
\begin{equation}
\Delta(X) =
\begin{cases}
1 &\text{ if $n \in \mathbb{Z}$}\\
0 &\text{ otherwise}.\label{Delta} \\
      \end{cases}
\end{equation}
throughout this article.

Fujii then narrows down $X$ to be a positive integer and gives an asymptotic formula for this, again exhibiting the behaviour that Gonek noted in \cite{GonekLandau}, about how the arithmetic nature of $X$ can affect the asymptotic expansion. Specifically, he finds for $X \geq 1$, $X \in \mathbb{Z}$ that
\begin{align}
\sum_{0< \gamma \leq T} \zeta ' (\rho) X^{\rho} & = \frac{T}{4 \pi} \log^2 \left( \frac{T}{2 \pi} \right) + (-1 + C_0 - \log X) \frac{T}{2 \pi} \log \left( \frac{T}{2 \pi} \right) \nonumber \\
& + \frac{T}{2 \pi} \left( 1 - C_0 - C_0^2 + 3C_1 + \sum_{X=mn} \Lambda (n) \log m - \left( C_0 - 1 + \frac{1}{2} \log X \right) \log X  \right) \nonumber \\
& + O(T e^{-C \sqrt{\log T}}) \label{Fujii4}
\end{align}
where $C_0, C_1$ are given above and $C$ is a positive constant. Clearly setting $X=1$ then gives (\ref{Fujii2}).

A generalisation of Fujii's result has been given in Jakhlouti and Mazhouda \cite{JakMaz} to give an analogue of (\ref{Fujii3})
for Dirichlet $L$-functions. This extension is taken further to an asymptotic formula at the $L$-function's $a$-points, for any fixed complex number $a$. We state the analogue to (\ref{Fujii3}) which is their Lemma 3 with $a=0$,
\begin{multline*}
\sum_{0< \gamma_{\chi} \leq T} L' (\rho_{ \chi},\chi) X^{\rho_{\chi}}  = \\
- \Delta (X) \chi [\Delta(X) X] \frac{T}{2 \pi} \Bigg( \log X \left(\frac{1}{2} \log \left( \frac{qT}{2 \pi} \right) - \frac{1}{2} + \frac{\pi i}{4}\right) - \sum_{mr=X} \Lambda (r) \log m \Bigg) \\
 + \frac{X}{\sqrt{q}} \sum_{m \leq \frac{qt}{2 \pi X}}  e^{2 \pi i m \frac{X}{q}} \overline{\chi} (m) \log^2 m + \frac{1}{2 \sqrt{q}} X \log X \sum_{m \leq \frac{qt}{2 \pi X}} e^{2 \pi i m \frac{X}{q}} \overline{\chi} (m) \log m \\
 - \left( \frac{1}{2 \sqrt{q}} X \log^2 X - \frac{\pi i}{4 \sqrt{q}} X \log X \right) \sum_{m \leq \frac{qt}{2 \pi X}} e^{2 \pi i m \frac{X}{q}} \overline{\chi} (m)  \\
 - \frac{X}{\sqrt{q}} \sum_{mr \leq \frac{qt}{2 \pi X}} e^{2 \pi i mr \frac{X}{q}} \Lambda (r) \overline{\chi} (r) \overline{\chi} (m) \log m  + O(\sqrt{T} \log^3 T)
\end{multline*}
as $T \rightarrow \infty$ for a fixed positive number $X$ and where $\chi$ is a primitive character mod $q$. Fixing $q=1$ gives (\ref{Fujii3}) and setting $q=1$ and $X=1$ gives (\ref{Fujii2}). 

Other ideas have grown from (\ref{Landau}). For example, Ford and Zaharescu \cite{FordZah} start with Landau's theorem (\ref{Landau}) and investigate the distribution of the fractional parts of $\alpha \gamma$, where $\alpha$ is a fixed non-zero real number. This idea is then expanded upon by Ford, Soundararajan and Zaharescu in \cite{FordSoundZah}. Further examples of results that start with (\ref{Landau}) are found in papers \cite{GonekEmil} and \cite{KLP}.

\section{Statement of the Results} \label{Statement}
Let $X$ be a fixed positive number. Let $\zeta^{({n})} (s)$ denote the $n$th derivative of the Riemann zeta function $\zeta (s)$ and let $\rho = \beta+i \gamma$ be a non-trivial zero of $\zeta(s)$. Write $s = \sigma + it$ with $\sigma, t \in \mathbb{R}$. We suppose that $T>T_0$ and that $T$ is not the imaginary part of the zeros of $\zeta (s)$. We further assume that $|T- \gamma| \gg \frac{1}{\log T}$. This restriction has no effect on the final result. Then we have the following result which is an analogue of Fujii's (\ref{Fujii3}).

\begin{theorem} \label{Explicit1}
For $X$ a fixed positive real number, we have
\begin{multline*}
\sum_{0< \gamma \leq T} \zeta^{(\lowercase{n})} (\rho)X^{\rho}  = \\ (-1)^n \left\{  \Delta(X) \frac{T}{2 \pi} \left( \log ^n X \left( \frac{1}{2} \log \left( \frac{T}{2 \pi} \right) - \frac{1}{2} + \frac{\pi i}{4} \right) - \sum_{mr=X} \Lambda (r) \log^n m  \right) \right.\\
+ X \log^n X \left(\frac{1}{2} \log X - \frac{\pi i}{4} \right) \sum_{m \leq \frac{T}{2 \pi X}} e^{2 \pi i m X} + \frac{X}{2} \log^n X \sum_{m \leq \frac{T}{2 \pi X}} e^{2 \pi i m X} \log m  \\
- X \sum_{mr \leq \frac{T}{2 \pi X}}  e^{2 \pi i mr X} \Lambda (r) \log^n (rX) \Bigg\} + O(\sqrt{T} \log^{n+2} T),
\end{multline*}
where
\[
\Delta(X) =
\begin{cases}
1 &\text{ if $n \in \mathbb{Z}$}\\
0 &\text{ otherwise}.\\
      \end{cases}
\]
\end{theorem}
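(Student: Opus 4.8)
The plan is to follow the contour-integration method of Landau, as refined by Gonek \cite{GonekLandau} and Fujii \cite{FujiiDist}. First I would express the sum as a contour integral via the residue theorem: since $\frac{\zeta'}{\zeta}(s)$ has a simple pole of residue $1$ at each (simple) zero $\rho$, the residue of $\frac{\zeta'}{\zeta}(s)\zeta^{(n)}(s)X^s$ there is exactly $\zeta^{(n)}(\rho)X^{\rho}$, so that
\[
\sum_{0<\gamma\leq T}\zeta^{(n)}(\rho)X^{\rho} = \frac{1}{2\pi i}\oint_{\mathcal R}\frac{\zeta'}{\zeta}(s)\zeta^{(n)}(s)X^s\,ds,
\]
where $\mathcal R$ is the positively oriented rectangle with vertices $a+ib_0$, $a+iT$, $1-a+iT$, $1-a+ib_0$, with $a=1+\frac{1}{\log T}$ and $b_0$ a small fixed height below the lowest zero. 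Keeping the bottom edge at height $b_0>0$ ensures the pole of the integrand at $s=1$ is not enclosed, so no extra residue appears.

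Next I would dispose of the two horizontal edges and the right edge. The bottom edge is $O(1)$ since the integrand is bounded there and $X$ is fixed. On the top edge $t=T$ the hypothesis $|T-\gamma|\gg 1/\log T$ gives the standard bound $\frac{\zeta'}{\zeta}(\sigma+iT)\ll\log^2 T$, while the functional equation and convexity give $\zeta^{(n)}(\sigma+iT)\ll T^{1/2-\sigma}\log^n T$ for $\sigma\leq 0$; integrating the product over the bounded $\sigma$-range produces the error term $O(\sqrt T\log^{n+2}T)$, which matches Fujii's $O(\sqrt T\log^3 T)$ when $n=1$. On the right edge $\sigma=a$ I would insert the absolutely convergent Dirichlet series $\frac{\zeta'}{\zeta}(s)=-\sum_r\Lambda(r)r^{-s}$ and $\zeta^{(n)}(s)=(-1)^n\sum_m(\log^n m)m^{-s}$ and integrate term by term. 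The diagonal terms $rm=X$, which occur only when $X\in\mathbb Z$, yield $(-1)^{n+1}\frac{T}{2\pi}\sum_{rm=X}\Lambda(r)\log^n m$, the contribution carrying the factor $\Delta(X)$; the off-diagonal terms are estimated to be admissible.

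The heart of the argument is the left edge $\sigma=1-a$, where $\Re(1-s)>1$. Here I would use the functional equation $\zeta(s)=\chi(s)\zeta(1-s)$ to write $\frac{\zeta'}{\zeta}(s)=\frac{\chi'}{\chi}(s)-\frac{\zeta'}{\zeta}(1-s)$ and, by Leibniz's rule, $\zeta^{(n)}(s)=\sum_{k=0}^n\binom{n}{k}\chi^{(n-k)}(s)(-1)^k\zeta^{(k)}(1-s)$. Substituting the Dirichlet series for $\zeta^{(k)}(1-s)$ and $\frac{\zeta'}{\zeta}(1-s)$ reduces the left-edge integral to a combination of integrals of the form $\frac{1}{2\pi i}\int\chi^{(j)}(s)Y^s\,ds$ with $Y=mX$ or $Y=mrX$. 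Each such integral I would evaluate by a stationary-phase (saddle-point) argument based on the asymptotics of $\chi(s)$ and its derivatives: the phase of $\chi(s)Y^s$ is stationary at $t=2\pi Y$, so for $Y\leq T/(2\pi)$ one extracts a main term proportional to $e^{2\pi i Y}$, which is the origin of the exponential sums over $m\leq\frac{T}{2\pi X}$ and $mr\leq\frac{T}{2\pi X}$. The binomial expansion together with $\frac{\chi'}{\chi}(s)=-\log\frac{t}{2\pi}+O(1/t)$ collapses, at the saddle, into the log-power factors $\log^n X$, $\log m$ and $\log^n(rX)$ and the constants $\frac12\log X-\frac{\pi i}{4}$ seen in the exponential-sum terms, while the $\frac{\chi'}{\chi}(s)$ factor supplies the $\frac12\log\frac{T}{2\pi}-\frac12+\frac{\pi i}{4}$ appearing in the $\Delta(X)$ term. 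I would package this saddle-point evaluation as a lemma generalising Gonek's, extended to the derivatives $\chi^{(j)}$.

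The main obstacle I anticipate is precisely this left-edge analysis: carrying out the stationary-phase evaluation uniformly for all $\chi^{(j)}$, tracking the considerable combinatorics generated by Leibniz's rule and the expansion of $\left(\log\frac{t}{2\pi}+\log m\right)^n$, and controlling the accumulated error from the saddle-point approximations (summed over the Dirichlet-series indices) so that it stays within $O(\sqrt T\log^{n+2}T)$. A secondary but instructive point is the emergence of the $\Delta(X)$ term: for non-integer $X$ the exponential sums genuinely oscillate and remain of lower order, whereas for $X\in\mathbb Z$ we have $e^{2\pi i mX}=1$, so those sums acquire size $\asymp T$ and combine with the $\Delta(X)$ main term to reproduce the integer asymptotic in the spirit of Fujii's (\ref{Fujii4}); verifying this dichotomy provides a useful consistency check on the bookkeeping.
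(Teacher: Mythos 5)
Your overall strategy coincides with the paper's: represent the sum as a contour integral over a rectangle with right edge at $\sigma=1+\tfrac{1}{\log T}$, discard the horizontal edges using the $\ll\log^2T$ bound for the logarithmic derivative together with a mean-value bound $\int|\zeta^{(n)}(\sigma+iT)|\,d\sigma\ll T^{1/2}\log^nT$, expand Dirichlet series on the right edge to extract the diagonal $\Delta(X)$ contribution, and on the left edge combine the functional equation (Leibniz form, as in Lemma \ref{functional equation zeta (s)}) with a Gonek-type stationary-phase lemma (Lemma \ref{Gonek 5}) to produce the exponential sums. The one genuine difference is the kernel: you integrate $\frac{\zeta'}{\zeta}(s)\,\zeta^{(n)}(s)X^s$, whereas the paper integrates $\frac{\xi'}{\xi}(s)\,\zeta^{(n)}(s)X^s$. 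Your contour choice correctly avoids the pole at $s=1$ and the trivial zeros, so the residue identity is fine either way.

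The gap is in your bookkeeping of where the stated constants come from, and it is a consequence of that kernel choice. In the paper, the term $\Delta(X)\frac{T}{2\pi}\log^nX\left(\frac12\log\frac{T}{2\pi}-\frac12+\frac{\pi i}{4}\right)$ arises on the \emph{right} edge: by Lemma \ref{LemmaLogDerivXi}, $\frac{\xi'}{\xi}(c+it)=\frac12\log\frac{t}{2\pi}+\frac{\pi i}{4}+\frac{\zeta'}{\zeta}(c+it)+O(1/t)$, and the Stirling part $\frac12\log\frac{t}{2\pi}+\frac{\pi i}{4}$ hits the diagonal term $m=X$ of the Dirichlet series of $\zeta^{(n)}$. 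With your kernel the right edge is a product of two Dirichlet series and yields \emph{only} $(-1)^{n+1}\Delta(X)\frac{T}{2\pi}\sum_{mr=X}\Lambda(r)\log^nm$; no $\log\frac{T}{2\pi}$ term can appear there. On your left edge the relevant factor is $\frac{\zeta'}{\zeta}(s)=\frac{\chi'}{\chi}(s)-\frac{\zeta'}{\zeta}(1-s)$ with $\frac{\chi'}{\chi}(s)=-\log\frac{t}{2\pi}+O(1/t)$: a \emph{full} logarithm and no $\frac{\pi i}{4}$. Carrying out the stationary-phase evaluation and the binomial collapse $(\log m-\log(mX))^n=(-1)^n\log^nX$ then gives, in place of the corresponding terms of the Theorem,
\[
(-1)^n\left\{X\log^{n+1}X\sum_{m\le\frac{T}{2\pi X}}e^{2\pi imX}+X\log^nX\sum_{m\le\frac{T}{2\pi X}}e^{2\pi imX}\log m - X\sum_{mr\le\frac{T}{2\pi X}}e^{2\pi imrX}\Lambda(r)\log^n(rX)\right\},
\]
i.e.\ coefficients $\log X$ and $1$ rather than $\frac12\log X-\frac{\pi i}{4}$ and $\frac12$, and no $\Delta(X)\left(\frac12\log\frac{T}{2\pi}-\frac12+\frac{\pi i}{4}\right)$ term at all. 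So the constants you claim "collapse out of the saddle point" do not, with your kernel. The formula you would actually obtain is equivalent to the stated one, but only up to the error term: for $X\in\mathbb{Z}$ the two arrangements have identical main terms (use $\sum_{m\le M}e^{2\pi imX}=M+O(1)$ and $\sum_{m\le M}e^{2\pi imX}\log m=M\log M-M+O(\log T)$), while for $X\notin\mathbb{Z}$ the discrepant exponential sums are $O_X(\log T)$. That equivalence is an extra argument your sketch does not contain.

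The fix is either to adopt the paper's kernel $\frac{\xi'}{\xi}$ --- whose antisymmetry $\frac{\xi'}{\xi}(s)=-\frac{\xi'}{\xi}(1-s)$ makes the same Stirling term $\frac12\log\frac{t}{2\pi}+\frac{\pi i}{4}$ appear on both vertical edges, producing the stated constants directly --- or to keep $\frac{\zeta'}{\zeta}$ and append the short comparison above showing your arrangement equals the stated one within $O(\log^{n+2}T)$. Everything else in your plan (horizontal edges, diagonal/off-diagonal split, Leibniz expansion, Gonek-type lemma) matches the paper's proof.
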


\begin{remark}
Setting $n=1$ in the theorem recovers Fujii's result in (\ref{Fujii3}).
\end{remark}

When we restrict $X$ to being a positive integer we obtain a special case of the above results. It is evident from the statement of the following corollary that we see the changing behaviour of our asymptotic expansions depending on whether $X>0$ is any real number and when $X \geq 1$ is an integer.

\begin{corollary} \label{IntegerCase} 
If $X \geq 1$, $X \in \mathbb{Z}$, then 
\begin{multline*}
\sum_{0< \gamma \leq T} \zeta^{(\lowercase{n})} (\rho) X^{\rho}  = \\
 (-1)^{n+1} \frac{T}{2 \pi} \log ^n X \Bigg\{  \sum_{k=0}^{n} \sum_{u=0}^{k+1} {n \choose k} {{k+1} \choose u} (-1)^u \frac{1}{k+1}  \log^{k+1-u} \left( \frac{T}{2 \pi} \right)\log^{u-k} X \\
+ \sum_{k=0}^{n}  \sum_{l=0}^{k} \sum_{u=0}^{k-l}  {n \choose k} {k \choose l} {{k-l} \choose u} (-1)^l l! \left( -1 + \sum_{j=0}^{l} (-1)^j C_j \right) \log^{k-l-u} \left( \frac{T}{2 \pi} \right) \log^{u-k} X \\
+ \sum_{k=0}^{n} {n \choose k} (-1)^{k+1} k! A_k \log ^{-k} X - \left( \log \left( \frac{T}{2 \pi} \right) -1 - \sum_{mr=X} \Lambda (r) \log^n m \right) \Bigg\} + E_n(T).
\end{multline*}
where
\[
E_n(T) = O \left(T  \mathrm{e}^{-C \sqrt{\log T}} \right)
\]
with $C$ is a positive constant. If we assume the Riemann Hypothesis, then
\[
E_n(T) = O \left(T^{\frac{1}{2}} \log^{n+2} T \right).
\]
Further, the $C_j$ are the coefficients in the Laurent expansion for $\zeta (s)$ about $s=1$, given by
\[
\zeta(s) = \frac{1}{s-1} + \sum_{j=0}^\infty C_j (s-1)^j
\]
and the $A_j$ are the coefficients in the Laurent expansion for $\frac{\zeta ' (s)}{\zeta (s)}$ about $s=1$, given by
\[
\frac{\zeta ' (s)}{\zeta (s)} = - \frac{1}{s-1} + \sum_{j=0}^\infty A_j (s-1)^j.
\]
\end{corollary}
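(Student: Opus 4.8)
The plan is to derive the corollary directly from Theorem~\ref{Explicit1} by specialising to $X \in \mathbb{Z}_{\geq 1}$ and then evaluating the resulting arithmetic sums asymptotically. Since $X$ is a positive integer we have $\Delta(X) = 1$, and every oscillating factor collapses because $e^{2\pi i m X} = e^{2\pi i m r X} = 1$ whenever $m, r \in \mathbb{Z}$. Writing $Y = \frac{T}{2\pi X}$, the content of Theorem~\ref{Explicit1} reduces to $(-1)^n$ times the diagonal term $\frac{T}{2\pi}\big(\log^n X(\tfrac12\log(\tfrac{T}{2\pi}) - \tfrac12 + \tfrac{\pi i}{4}) - \sum_{mr=X}\Lambda(r)\log^n m\big)$ together with the three sums $S_0 = \sum_{m\leq Y} 1$, $S_1 = \sum_{m\leq Y}\log m$ and $S_2 = \sum_{mr\leq Y}\Lambda(r)\log^n(rX)$, weighted by the explicit prefactors appearing in the theorem. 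The whole task is thereby reduced to finding asymptotics for $S_0$, $S_1$ and $S_2$ with errors small enough to be absorbed into $E_n(T)$.

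The first two sums are elementary: I would use $S_0 = Y + O(1)$ and, by Euler--Maclaurin (equivalently Stirling), $S_1 = Y\log Y - Y + O(\log Y)$. The decisive simplification is that these enter multiplied by $X$, and $XY = \frac{T}{2\pi}$, so the leading parts of $X S_0$ and $X S_1$ are of size $\frac{T}{2\pi}$ and carry the powers of $\log(\tfrac{T}{2\pi})$. At this stage I expect the two $\frac{\pi i}{4}$ contributions --- one from the diagonal term and one from the factor $(\tfrac12\log X - \tfrac{\pi i}{4})$ multiplying $S_0$ --- to cancel, consistent with the real, $\pi i$-free shape of the corollary.

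The heart of the argument is $S_2$. Expanding $\log^n(rX) = \sum_{k=0}^n \binom{n}{k}\log^{n-k}X\,\log^k r$ reduces it to the family $\sum_{mr\leq Y}\Lambda(r)\log^k r$ for $0 \leq k \leq n$, each of which has Dirichlet series $\zeta(s)\,(-1)^{k+1}\big(\tfrac{\zeta'}{\zeta}\big)^{(k)}(s)$. I would apply Perron's formula and shift the contour to the left of $s=1$. The main term is the residue at $s=1$: inserting $\zeta(s) = (s-1)^{-1} + \sum_j C_j(s-1)^j$ and $\big(\tfrac{\zeta'}{\zeta}\big)^{(k)}(s) = (-1)^{k+1}k!\,(s-1)^{-(k+1)} + k!A_k + \cdots$, the residue of $\zeta(s)(\tfrac{\zeta'}{\zeta})^{(k)}(s)\tfrac{Y^s}{s}$ is a polynomial in $\log Y$ of degree $k+1$ whose coefficients are built precisely from the $C_j$ and $A_k$; this is the origin of the factors $\big(-1 + \sum_{j=0}^l(-1)^jC_j\big)$ and $(-1)^{k+1}k!A_k$ in the statement. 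The remaining integral is controlled by a zero-free region: the de la Vallée Poussin region gives an error $O\big(Y e^{-c\sqrt{\log Y}}\big)$, hence the unconditional $E_n(T) = O(T e^{-C\sqrt{\log T}})$, while under RH one shifts to $\mathrm{Re}(s) = \tfrac12 + \varepsilon$ and recovers $E_n(T) = O(T^{1/2}\log^{n+2}T)$, matching the error already present in Theorem~\ref{Explicit1}.

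Finally I would assemble everything. Substituting $\log Y = \log(\tfrac{T}{2\pi}) - \log X$ and expanding each polynomial in $\log Y$ by the binomial theorem is what generates the inner sums over $u$ with weights $\binom{k+1}{u}(-1)^u$ and $\binom{k-l}{u}(-1)^u$ and the mixed powers $\log^{\bullet}(\tfrac{T}{2\pi})\log^{u-k}X$; factoring out $(-1)^{n+1}\frac{T}{2\pi}\log^n X$ then puts the answer in the stated form, with the diagonal term contributing the final bracket $-(\log(\tfrac{T}{2\pi}) - 1 - \sum_{mr=X}\Lambda(r)\log^n m)$. The main obstacle is organisational rather than conceptual: correctly computing the residue of a product of two Laurent series at the pole of order $k+2$ at $s=1$, and then bookkeeping the resulting multinomial coefficients so that they collapse into the compact triple and double sums of the corollary. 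Keeping the coefficients $C_j$ and $A_k$ and the two independent logarithms $\log(\tfrac{T}{2\pi})$ and $\log X$ cleanly separated throughout is where almost all of the effort will lie.
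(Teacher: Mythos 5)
Your proposal is correct and takes essentially the same route as the paper: specialise Theorem~\ref{Explicit1} to integer $X$ (so $\Delta(X)=1$ and the exponential sums collapse), evaluate the elementary sums and $\sum_{mr\leq Y}\Lambda(r)\log^k r$ with main terms built from the $C_j$ and $A_k$, then expand $\log Y=\log\left(\frac{T}{2\pi}\right)-\log X$ binomially and factor out $(-1)^{n+1}\frac{T}{2\pi}\log^n X$. The only difference is presentational: where you sketch the Perron-formula/residue/zero-free-region argument for $\sum_{mr\leq Y}\Lambda(r)\log^k r$, the paper simply imports that asymptotic ``in an entirely analogous way'' from Hughes and Pearce-Crump \cite{HPC}, with identical main terms and error terms.
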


\begin{remark}
Note that the $A_j$ are related to the $C_j$ by the following recursive formula, as shown in Israilov \cite{Israilov}, given by
\[
A_j =
\begin{cases}
C_0 &\text{ if $j=0$}\\
(j+1)C_j - \sum_{k=0}^{j-1} A_k C_{j-1-k} &\text{ if $j \geq 1$}.\\
\end{cases}
\]
\end{remark}

\begin{remark}
Setting $n=1$ in the corollary recovers Fujii's result in (\ref{Fujii4}). Setting $n=1$ and $X=1$ recovers Fujii's result in (\ref{Fujii2}). Setting $X=1$ recovers our result from \cite{HPC} for general $n$, given later on in this paper as Theorem \ref{General SC}.
\end{remark}

\section{Outline of the Paper}
So far we have described the motivation for studying asymptotic expansions of the sums given in Section \ref{Introduction}. We have stated the main results in Section \ref{Statement} that we will prove in the following sections.

In Section \ref{Prelims} we will recall some basic facts about the Riemann zeta function $\zeta (s)$ and the Riemann xi function $\xi (s)$. One of the main results that we will need to state is the Theorem from Hughes and Pearce-Crump \cite{HPC} that will be essential in proving Corollary~\ref{IntegerCase}.

In Section \ref{ProofThm} we will use the tools from Section \ref{Prelims} to prove Theorem \ref{Explicit1}. The integral we use to prove this theorem is given by

\begin{equation}
I = \frac{1}{2 \pi i} \int_R \frac{\xi '}{\xi} (s) \zeta^{(n)} (s) X^s \ ds
\end{equation}
where $\xi (s)$ is the Riemann xi function, $\zeta^{(n)} (s)$ denotes the $n$th derivative of $\zeta (s)$ and R denotes the rectangular positively oriented contour with the vertices are $c+i, c+iT, 1-c+it, 1-c+i$ connected in this order and $c = 1 + \frac{1}{\log T}$. The non-trivial zeros of $\zeta (s)$ up to a height $T$ are contained within $R$ and so by Cauchy's Theorem the integral represents the summation

\begin{equation}
I = \sum_{0 < \gamma \leq T}  \zeta^{(n)} (\rho) X^{\rho}.
\end{equation}

We split this section up into several subsections, each corresponding to different parts of the contour that we will be integrating over, to show that most of the contribution comes from the left-hand side of the contour, while the other sides mostly only contribute to the error term.

Finally in Section \ref{ProofCor} we will prove Corollary \ref{IntegerCase} which will highlight the differences between the general case proved in Section \ref{ProofThm} for any positive number $X$ and the case when $X$ is a positive integer. As described above we will use a result from \cite{HPC} to do most of the work here. This will again highlight the observation of Gonek's in \cite{GonekLandau} that the asymptotic formulae tend to change quite dramatically depending on the arithmetic nature of $X$.

\section{Preliminary Lemmas} \label{Prelims}
In this section we recall some basic information about $\zeta (s)$ and $\xi (s)$, as well as recalling some results from other papers that will be useful in our proof. Any facts that are not explicitly referenced in this section can be found in any good text about the Riemann zeta function, for example they can be found in Titchmarsh \cite{Titch}.

Firstly recall that the functional equation for $\zeta (s)$ is given by
\begin{equation*}
\zeta (s) = \chi (s) \zeta (1-s)
\end{equation*}
where
\begin{equation*}
\chi (s) = 2^s \pi^{s-1} \sin \left( \frac{\pi s}{2} \right) \Gamma (1-s)
\end{equation*}
and $\Gamma (s)$ is the Gamma function throughout this paper. We state a more general functional equation for $\zeta ^{(n)} (s)$ that is proved using the functional equation for $\zeta (s)$ and the Leibniz product rule.

\begin{lemma}\label{functional equation zeta (s)}
The general functional equation for $\zeta ^{(n)} (s)$ is given by the following formula
\begin{equation}
\zeta ^{(n)} (s) = \frac{1}{\chi (1-s)} \sum_{k=0}^n \binom{n}{k} (-1)^k \frac{\chi^{(n-k)} (s)}{\chi(s)} \zeta^{(k)} (1-s). \label{eq:GeneralFuncEqu1}
\end{equation}
\end{lemma}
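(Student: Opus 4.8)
The plan is to derive the functional equation for $\zeta^{(n)}(s)$ directly from the standard functional equation $\zeta(s) = \chi(s)\zeta(1-s)$ by differentiating $n$ times. The key observation is that differentiating with respect to $s$ turns $\zeta(1-s)$ into $-\zeta'(1-s)$, so each derivative of the factor $\zeta(1-s)$ introduces a sign $(-1)$ together with an increase in the order of the derivative of $\zeta$.

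First I would write $\zeta(s) = \chi(s)\,\zeta(1-s)$ and apply the Leibniz product rule to compute the $n$th derivative of the right-hand side. This gives
\[
\zeta^{(n)}(s) = \sum_{k=0}^{n} \binom{n}{k} \chi^{(n-k)}(s)\, \frac{d^k}{ds^k}\bigl[\zeta(1-s)\bigr].
\]
The next step is to evaluate $\frac{d^k}{ds^k}\zeta(1-s)$. By the chain rule, each differentiation contributes a factor of $-1$ from the inner derivative of $1-s$, so $\frac{d^k}{ds^k}\zeta(1-s) = (-1)^k \zeta^{(k)}(1-s)$. Substituting this in yields
\[
\zeta^{(n)}(s) = \sum_{k=0}^{n} \binom{n}{k} (-1)^k \chi^{(n-k)}(s)\, \zeta^{(k)}(1-s).
\]

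To match the stated form exactly, I would then use the functional equation once more in the reflected direction. Applying $\zeta(s) = \chi(s)\zeta(1-s)$ at the argument $1-s$ gives $\zeta(1-s) = \chi(1-s)\zeta(s)$, whence $\chi(s)\chi(1-s) = 1$, i.e.\ $\chi(s) = 1/\chi(1-s)$. Factoring out $\chi(s) = 1/\chi(1-s)$ from the sum and writing the remaining $\chi$-factors as the ratio $\chi^{(n-k)}(s)/\chi(s)$ produces exactly
\[
\zeta^{(n)}(s) = \frac{1}{\chi(1-s)} \sum_{k=0}^{n} \binom{n}{k} (-1)^k \frac{\chi^{(n-k)}(s)}{\chi(s)}\, \zeta^{(k)}(1-s),
\]
which is the claimed identity \eqref{eq:GeneralFuncEqu1}.

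There is no serious obstacle here; the result is essentially a bookkeeping exercise combining the Leibniz rule with the chain-rule sign $(-1)^k$. The only point requiring a little care is confirming the reflection identity $\chi(s)\chi(1-s)=1$, so that the prefactor $1/\chi(1-s)$ and the normalisation $\chi^{(n-k)}(s)/\chi(s)$ are consistent; this follows immediately from applying the functional equation both at $s$ and at $1-s$. I would also note that the validity of the formula extends to all $s$ by analytic continuation, away from the poles of the $\chi$-factors, since both sides are meromorphic.
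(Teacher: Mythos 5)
Your proposal is correct and follows exactly the route the paper indicates (the paper only sketches this lemma as a consequence of the functional equation $\zeta(s)=\chi(s)\zeta(1-s)$ together with the Leibniz product rule): Leibniz plus the chain-rule sign $(-1)^k$ gives $\zeta^{(n)}(s)=\sum_{k=0}^{n}\binom{n}{k}(-1)^k\chi^{(n-k)}(s)\zeta^{(k)}(1-s)$, and the reflection identity $\chi(s)\chi(1-s)=1$ rewrites $\chi^{(n-k)}(s)$ as $\frac{1}{\chi(1-s)}\cdot\frac{\chi^{(n-k)}(s)}{\chi(s)}$, which is the stated form. Your closing remarks on analytic continuation and on verifying $\chi(s)\chi(1-s)=1$ are the right points of care, so nothing is missing.
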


We now recall that for $\sigma >1$ we may write both $\zeta^{(n)} (s)$ and $\frac{\zeta '}{\zeta} (s)$ in terms of their Dirichlet series which are given by
\begin{equation}
\zeta^{(n)} (s) = (-1)^n \sum_{m=1}^{\infty} \frac{\log^n m}{m^s} \label{ZetaDeriv}
\end{equation}
and
\begin{equation}
\frac{\zeta'}{\zeta} (s) = - \sum_{r=1}^{\infty} \frac{\Lambda (r)}{r^s} \label{ZetaLogDeriv},
\end{equation}
where $\Lambda (r)$ is the von Mangoldt function defined in (\ref{vonM}).

We will also need the following result that we proved in \cite[Sect.5]{HPC} for the integral along the top and the bottom of our contour.
\begin{lemma} \label{main S^T}
For $c = 1 + \frac{1}{\log T}$, we have
\begin{equation*}
\int_{1-c}^{c} |\zeta ^{(n)} (\sigma +iT) | \ d\sigma \ll T^{\frac{1}{2}} \log^n T. \label{maintop}
\end{equation*}
\end{lemma}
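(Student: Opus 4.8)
The plan is to reduce the estimate to pointwise bounds on $|\zeta^{(n)}(\sigma+iT)|$ across the segment $1-c \le \sigma \le c$ and then integrate, exploiting the fact that the growth of $\zeta$ in $\sigma$ is exponential, so that integrating in $\sigma$ buys back a factor of $1/\log T$. First I would pass from $\zeta^{(n)}$ to $\zeta$ itself by Cauchy's integral formula: writing $\zeta^{(n)}(s) = \frac{n!}{2\pi i}\oint_{|w-s|=r}\frac{\zeta(w)}{(w-s)^{n+1}}\,dw$ with radius $r = 1/\log T$ gives $|\zeta^{(n)}(\sigma+iT)| \ll (\log T)^n \max|\zeta|$, the maximum taken over a disc of radius $1/\log T$ centred at $\sigma+iT$. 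This trades each derivative for one factor of $\log T$ and reduces matters to controlling $\zeta$ in a thin horizontal strip about $t=T$, where the classical bounds are uniform.

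Next I would record the size of $\zeta$ in three ranges of $\sigma$. For $\sigma \ge 1$ the Dirichlet series and the distance $|s-1|\ge T$ from the pole give $\zeta(\sigma+iT) = O(\log T)$. For $0 \le \sigma \le 1$ I would invoke the convexity bound $\zeta(\sigma+iT) \ll T^{(1-\sigma)/2}\log T$ from Titchmarsh \cite{Titch}. For $\sigma < 0$ I would use the functional equation $\zeta(s)=\chi(s)\zeta(1-s)$ together with $|\chi(\sigma+iT)| \asymp (T/2\pi)^{1/2-\sigma}$ and the boundedness of $\zeta(1-\sigma-iT)$ when $1-\sigma>1$, which yields $\zeta(\sigma+iT) \ll T^{1/2-\sigma}$. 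Combining these with the Cauchy estimate gives $|\zeta^{(n)}(\sigma+iT)| \ll T^{(1-\sigma)/2}\log^{n+1}T$ on $[0,1]$ and $|\zeta^{(n)}(\sigma+iT)| \ll T^{1/2-\sigma}\log^{n}T$ on $[1-c,0]$.

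Then I would split $\int_{1-c}^{c}$ at $\sigma = 0,\tfrac12,1$ and estimate each piece, the key mechanism being that $\int T^{(1-\sigma)/2}\,d\sigma$ and $\int T^{1/2-\sigma}\,d\sigma$ each contribute a factor $1/\log T$ from the exponential in $\sigma$, cancelling one power of $\log T$ from the pointwise bounds. Concretely $\int_{0}^{1/2} T^{(1-\sigma)/2}\,d\sigma \ll T^{1/2}/\log T$, so the range $[0,\tfrac12]$ contributes $\ll T^{1/2}\log^{n}T$; the remaining ranges $[1-c,0]$, $[\tfrac12,1]$ and $[1,c]$ contribute the smaller amounts $T^{1/2}\log^{n-1}T$, $T^{1/4}\log^{n}T$ and $\log^{n}T$ respectively (the last because the interval $[1,c]$ has length $1/\log T$ while $|\zeta^{(n)}| \ll \log^{n+1}T$ there). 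Summing the four pieces yields the claimed bound $T^{1/2}\log^{n}T$.

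I expect the only delicate point to be the bookkeeping of the logarithmic factors so that the final power of $\log T$ is exactly $n$ and not $n+1$: the convexity bound and the Cauchy estimate together naively produce $\log^{n+1}T$, and it is precisely the $1/\log T$ gained from integrating the $\sigma$-exponential near $\sigma = 0$, where the integrand $T^{(1-\sigma)/2}$ is maximal and dominates the whole integral, that recovers the sharp exponent. I would also check that the pointwise bounds hold uniformly on the slightly enlarged discs used in the Cauchy step (near $\sigma = 0$ the disc straddles the imaginary axis, where the convexity and functional-equation estimates match up to a factor $\log T$) and that the standing assumption $|T-\gamma|\gg 1/\log T$ keeps these discs away from any zero; both are routine given the uniformity of the estimates in \cite{Titch}.
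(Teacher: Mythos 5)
The paper does not actually prove Lemma \ref{main S^T} here: it is imported wholesale from \cite[Sect.~5]{HPC}, so there is no inline argument to compare against. Your proof is correct in its overall mechanism and is the standard route to such a bound: pointwise estimates of convexity and functional-equation type across the segment, with the crucial factor $\tfrac{1}{\log T}$ recovered by integrating $T^{(1-\sigma)/2}$ in $\sigma$. The only structural difference from what is done in the cited source is that you convert $\zeta^{(n)}$ into $\zeta$ via Cauchy's formula on discs of radius $\tfrac{1}{\log T}$, whereas one can instead bound $\zeta^{(n)}$ directly by differentiating the functional equation (Lemma \ref{functional equation zeta (s)}) and using $\chi^{(j)}(s)/\chi(s) \ll \log^{j} T$ together with bounds on $\zeta^{(k)}$ to the right of the critical line; both routes trade each derivative for a factor of $\log T$ and lead to the same estimate, so this is a matter of taste rather than substance.

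Two corrections are needed, neither fatal. First, on $[1-c,0]$ you justify $\zeta(\sigma+iT) \ll T^{1/2-\sigma}$ by ``the boundedness of $\zeta(1-\sigma-iT)$ when $1-\sigma>1$''. In the range at hand $1-\sigma \in [1, 1+\tfrac{1}{\log T}]$, so the argument of $\zeta$ is within $\tfrac{1}{\log T}$ of the $1$-line and $\zeta$ is \emph{not} uniformly bounded there; the correct uniform statement is $\zeta(1-\sigma-iT) \ll \log T$ (valid for real part $\geq 1$ and imaginary part $\asymp T$). This costs one factor of $\log T$, so the pointwise bound on $[1-c,0]$ is $T^{1/2-\sigma}\log^{n+1}T$, and since that interval has length $\tfrac{1}{\log T}$ and $T^{-\sigma} \ll 1$ there, its contribution is $\ll T^{1/2}\log^{n}T$ rather than your claimed $T^{1/2}\log^{n-1}T$ --- still within the target, so the lemma survives, but the bookkeeping as written is wrong. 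Second, your closing concern that the hypothesis $|T-\gamma| \gg \tfrac{1}{\log T}$ is needed to keep the Cauchy discs away from zeros is a red herring: upper bounds on $|\zeta|$ and its derivatives are completely insensitive to zeros. That hypothesis is required elsewhere in the paper, namely for the bound $\frac{\xi'}{\xi}(s) \ll \log^2 T$ on the horizontal sides, which involves the \emph{logarithmic} derivative; for the present lemma it can be dropped entirely.
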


For the Riemann xi function we write
\begin{equation}
\xi (s) = \frac{1}{2} s (s-1) \pi^{-\frac{s}{2}} \Gamma \left( \frac{s}{2} \right) \zeta(s)   \label{xi}
\end{equation}
so the functional equation for $\xi (s)$ is given by
\begin{equation*}
\xi (s) = \xi (1-s).
\end{equation*}

We now observe that
\begin{equation}
\frac{\xi '}{\xi} (s) = \frac{2s -1}{s(s-1)} - \frac{1}{2} \log \pi + \frac{1}{2} \psi \left( \frac{s}{2} \right) + \frac{\zeta '}{\zeta} (s) \label{LogDerivXi1}
\end{equation}
where we have written
\begin{equation*}
\psi (s) = \frac{\Gamma '}{\Gamma} (s)
\end{equation*}
and for $|\arg s| < \pi - \delta$ with arbitrarily fixed positive $\delta$ and for $|s| \geq \frac{1}{2}$ we have
\begin{equation*}
\psi (s) = \log s + O \left( \frac{1}{|s|} \right) = \log t + \frac{\pi i}{2} + O \left( \frac{1}{t} \right)
\end{equation*}
as $t \rightarrow \infty$.

Combining these two observations, we have the following Lemma.

\begin{lemma} \label{LemmaLogDerivXi}
With the conditions written above, we have
\begin{equation}
\frac{\xi '}{\xi} (s) = \frac{1}{2} \log \left( \frac{t}{2 \pi} \right) + \frac{\pi i}{4} + \frac{\zeta '}{\zeta} (s) + O \left( \frac{1}{t} \right). \label{LogDerivXi2}
\end{equation}
\end{lemma}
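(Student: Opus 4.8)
The plan is to combine the two displayed observations that immediately precede the lemma, namely the partial-fraction identity
\[
\frac{\xi '}{\xi} (s) = \frac{2s -1}{s(s-1)} - \frac{1}{2} \log \pi + \frac{1}{2} \psi \left( \frac{s}{2} \right) + \frac{\zeta '}{\zeta} (s)
\]
from (\ref{LogDerivXi1}) and the stated asymptotic expansion of the digamma function $\psi$. The only genuine work is to track the three terms on the right-hand side other than $\frac{\zeta '}{\zeta}(s)$ as $t \to \infty$ (with $\sigma$ in a fixed bounded strip, as in our application) and to show that they collapse to $\frac{1}{2}\log\left(\frac{t}{2\pi}\right) + \frac{\pi i}{4}$ up to an error of size $O(1/t)$.

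First I would dispose of the rational term. Writing $s = \sigma + it$ with $\sigma$ bounded, we have $|s| \asymp t$ and $|s-1| \asymp t$, so that $\frac{2s-1}{s(s-1)} = O(1/t)$, which is absorbed into the error term. Next I would treat the digamma term by applying the asymptotic $\psi(w) = \log w + O(1/|w|)$ with $w = s/2$; this is legitimate since $|\arg(s/2)| = |\arg s|$ stays inside a fixed compact subinterval of $(-\pi,\pi)$ and $|s/2| \geq \frac{1}{2}$ once $t$ is large. The key computation is the complex logarithm $\log(s/2) = \log|s/2| + i \arg(s/2)$: for bounded $\sigma$ one has $|s/2| = \frac{t}{2}\left(1 + O(1/t^2)\right)$, giving $\log|s/2| = \log(t/2) + O(1/t^2)$, while $\arg(s/2) = \frac{\pi}{2} + O(\sigma/t) = \frac{\pi}{2} + O(1/t)$ since $\frac{\pi}{2} - \arg(s/2) = \arctan(\sigma/t)$. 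Hence $\psi(s/2) = \log(t/2) + \frac{\pi i}{2} + O(1/t)$, and therefore $\frac{1}{2}\psi(s/2) = \frac{1}{2}\log(t/2) + \frac{\pi i}{4} + O(1/t)$.

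Finally I would combine the constant and logarithmic pieces, using $-\frac{1}{2}\log\pi + \frac{1}{2}\log(t/2) = \frac{1}{2}\log\left(\frac{t}{2\pi}\right)$, and collect the $\frac{\pi i}{4}$ and the unchanged $\frac{\zeta '}{\zeta}(s)$ together with the two $O(1/t)$ contributions, which yields precisely the claimed identity (\ref{LogDerivXi2}). The main (and essentially only) obstacle is the careful bookkeeping of the argument of the complex logarithm, so as to produce the exact constant $\frac{\pi i}{4}$ with a genuine $O(1/t)$ error rather than a cruder bound; every other step is immediate substitution into (\ref{LogDerivXi1}).
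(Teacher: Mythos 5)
Your proof is correct and takes exactly the paper's approach: the paper derives the lemma by simply "combining" the partial-fraction identity (\ref{LogDerivXi1}) with the stated digamma asymptotic, which is precisely what you do. Your write-up merely supplies the bookkeeping the paper leaves implicit — the $O(1/t)$ bound on $\frac{2s-1}{s(s-1)}$ for bounded $\sigma$, and the expansion of $\log(s/2)$ into modulus and argument giving the constant $\frac{\pi i}{4}$ — all of which is done correctly.
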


Finally to prove Corollary \ref{IntegerCase} we will need the main result from \cite{HPC} which we state in fullness here for ease of reference.
\begin{theorem}\label{General SC}
With the setup as stated in our main results Section \ref{Statement} and with the $C_j$ and $A_j$ coefficients defined in Corollary \ref{IntegerCase}, we have
\begin{align*}
\sum_{0 < \gamma \leq T} \zeta^{(n)} (\rho) &=
 (-1)^{n+1} \frac{1}{n+1} \frac{T}{2 \pi} \log^{n+1} \left( \frac{T}{2 \pi} \right)   \\
&+ (-1)^{n+1} \sum_{k=0}^n \binom{n}{k}  (-1)^ {k}  k!  \left(-1 + \sum_{j=0}^k (-1)^j C_j \right) \frac{T}{2 \pi} \log^{n-k} \left( \frac{T}{2 \pi} \right) \\
&+ n! A_n  \frac{T}{2 \pi}  + E_n(T)
\end{align*}
where unconditionally,
\[
E_n(T) = O \left(T  \mathrm{e}^{-C \sqrt{\log T}} \right)
\]
with $C$ is a positive constant. If we assume the Riemann Hypothesis, then
\[
E_n(T) = O \left(T^{\frac{1}{2}} \log^{n+2} T \right).
\]
\end{theorem}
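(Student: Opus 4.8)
The plan is to evaluate $\sum_{0<\gamma\leq T}\zeta^{(n)}(\rho)$ by the same contour method that drives the rest of the paper, specialised to the absent twisting factor. I would start from
\[
I = \frac{1}{2\pi i}\int_R \frac{\xi'}{\xi}(s)\,\zeta^{(n)}(s)\,ds,
\]
taken over the rectangle $R$ with vertices $c+i$, $c+iT$, $1-c+iT$, $1-c+i$ and $c = 1 + 1/\log T$. Since the non-trivial zeros of $\zeta$ are exactly the zeros of $\xi$, and $\frac{\xi'}{\xi}$ has a simple pole there of residue equal to the multiplicity, the residue theorem gives $I = \sum_{0<\gamma\leq T}\zeta^{(n)}(\rho)$ with zeros counted with multiplicity. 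I would then split $I$ over the four sides of $R$ and show that the whole main term comes from the left edge, the other three edges feeding only the error.

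For the right edge $\sigma=c>1$ I would insert the Dirichlet series (\ref{ZetaDeriv}) and (\ref{ZetaLogDeriv}) together with the expansion of Lemma \ref{LemmaLogDerivXi}. Every resulting term then carries an oscillating factor $m^{-it}$ (or $(mr)^{-it}$) with $m,r\geq 1$, and the only potentially non-oscillatory contribution, from $m=1$ (resp.\ $m=r=1$), vanishes because $\log^n 1=0$ and $\Lambda(1)=0$; integrating the oscillatory pieces by parts in $t$ leaves a bound of size $O(\log^{n+1}T)$, well inside the error. For the top edge $t=T$ I would combine Lemma \ref{main S^T} with the bound $\frac{\xi'}{\xi}(\sigma+iT)\ll \log^2 T$, valid uniformly in $\sigma$ under the spacing hypothesis $|T-\gamma|\gg 1/\log T$, giving a contribution $O(T^{1/2}\log^{n+2}T)$; the bottom edge $t=1$ is trivially $O(1)$. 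These already supply the conditional error $E_n(T)=O(T^{1/2}\log^{n+2}T)$ under RH, and the unconditional bound $O(Te^{-C\sqrt{\log T}})$ would follow by replacing the RH-dependent estimates for $\frac{\zeta'}{\zeta}$ near the line by the classical zero-free-region bounds in these edge analyses.

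The heart of the argument is the left edge $\sigma=1-c$, where $1-s$ lies in the half-plane of absolute convergence. Here I would use the general functional equation (\ref{eq:GeneralFuncEqu1}) to express $\zeta^{(n)}(s)$ through the factors $\frac{\chi^{(n-k)}(s)}{\chi(s)}$ and $\zeta^{(k)}(1-s)$ for $0\leq k\leq n$, each $\zeta^{(k)}(1-s)$ having a convergent Dirichlet series since $\Re(1-s)=c>1$, while Lemma \ref{LemmaLogDerivXi} replaces $\frac{\xi'}{\xi}(s)$ by $\tfrac12\log(t/2\pi)+\tfrac{\pi i}{4}+\frac{\zeta'}{\zeta}(s)+O(1/t)$, the logarithmic-derivative term again being reflected by the functional equation. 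Each resulting integrand is a slowly varying amplitude, a power of $\log(t/2\pi)$ arising from the asymptotics of $\chi^{(j)}/\chi$, times $\chi(s)\,m^{s-1}$, whose phase is stationary at $t=2\pi m$. A stationary-phase (Gonek-type) evaluation localises the mass to $m\leq T/2\pi$ and, because the twist is trivial, reassembles into the genuinely non-oscillatory sums $\sum_{m\leq T/2\pi}\log^k m$ and $\sum_{mr\leq T/2\pi}\Lambda(r)\log^k(rm)$, the $X=1$ counterparts of the oscillatory sums in Theorem \ref{Explicit1} with every $e^{2\pi i m}=1$.

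It then remains to evaluate these sums. Partial summation against the elementary asymptotics for $\sum_{m\leq Y}\log^k m$ (equivalently Stirling or Euler--Maclaurin) turns each into $T/2\pi$ times a polynomial in $\log(T/2\pi)$, while the prime-power sum is controlled through the partial sums of the Dirichlet coefficients of $-\frac{\zeta'}{\zeta}(s)\zeta^{(n)}(s)$, whose leading behaviour is read off from the Laurent expansion of that product at $s=1$; this is where the coefficients $A_j$ enter. Collecting every contribution and matching the binomial factors against the Laurent data $C_j,A_j$ should yield the stated degree-$(n+1)$ polynomial in $\log(T/2\pi)$, with the constant term $n!A_n\,T/2\pi$ produced by the lowest-order coefficient. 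I expect the main obstacle to be precisely this final bookkeeping rather than any one analytic estimate: one must keep the nested binomial sums, the $C_j$, and the $A_j$ perfectly aligned so that the top power, all intermediate logarithmic powers, and especially the constant term $n!A_n$ emerge with exactly the right coefficients.
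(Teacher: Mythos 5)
Your outline follows essentially the same route as the actual proof. One caveat on provenance: this paper does not prove Theorem \ref{General SC} at all; it imports it from \cite{HPC}, and the proof there is precisely the $X=1$ instance of the machinery of Section \ref{ProofThm} — the same rectangle $R$, the same edge bounds (Lemma \ref{main S^T} together with $\frac{\xi'}{\xi}(s)\ll\log^2T$), and the same functional-equation-plus-stationary-phase treatment of the left edge (Lemma \ref{Gonek 5} with trivial twist). Your observation that at $X=1$ the right edge loses its diagonal terms because $\log^n 1=0$ and $\Lambda(1)=0$ is exactly how the $\Delta(X)$ main terms of Theorem \ref{Explicit1} disappear. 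In fact, within this paper the quickest derivation is: set $X=1$ in Theorem \ref{Explicit1} (every term carrying a factor $\log X$ vanishes for $n\geq1$, so only the prime-power sum survives) and then feed the surviving sum into the evaluation of $S$ from Section \ref{ProofCor} with $k=n$, $X=1$; the Remark after Corollary \ref{IntegerCase} records exactly this specialisation.

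There are, however, two concrete slips in your final step, and they sit exactly where the theorem's distinctive constant $n!\,A_n$ is produced. First, the sum the left edge actually leaves behind is $\sum_{mr\leq T/2\pi}\Lambda(r)\log^n r$ (the binomial collapse in $J_3$ turns $\log^k(m)\log^{n-k}(mrX)$ into $(-1)^n\log^n(rX)$, which is $\log^n r$ at $X=1$), not $\sum_{mr\leq T/2\pi}\Lambda(r)\log^k(rm)$ as you wrote: the latter telescopes, via $\sum_{r\mid j}\Lambda(r)=\log j$, to the purely elementary $\sum_{j\leq T/2\pi}\log^{k+1}j$, which carries no arithmetic information and could never yield $A_n$. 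Second, and correspondingly, the generating Dirichlet series is $(-1)^{n+1}\zeta(s)\bigl(\frac{\zeta'}{\zeta}\bigr)^{(n)}(s)$, whose coefficients are $\sum_{mr=k}\Lambda(r)\log^n r$, not $-\frac{\zeta'}{\zeta}(s)\,\zeta^{(n)}(s)$, whose coefficients are (up to sign) $\sum_{mr=k}\Lambda(r)\log^n m$; both products have a pole of order $n+2$ at $s=1$ with the same leading coefficient, so your leading term survives, but their lower Laurent data differ, and it is the constant Laurent coefficient $n!\,A_n$ of $\bigl(\frac{\zeta'}{\zeta}\bigr)^{(n)}$ paired with the simple pole of $\zeta(s)$ that gives the term $n!\,A_n\frac{T}{2\pi}$ — with your product every subleading coefficient would come out wrong. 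Relatedly, the dichotomy in $E_n(T)$ does not live in the rectangle-edge analysis, which is unconditionally $O(T^{1/2}\log^{n+2}T)$; it arises in the Perron evaluation of this $\Lambda$-sum, where the classical zero-free region gives $O\bigl(T\mathrm{e}^{-C\sqrt{\log T}}\bigr)$ and RH improves it to $O(T^{1/2}\log^{n+2}T)$. Both slips are mechanical to repair, but as written the final bookkeeping would fail below the leading order.
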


\section{Proof of Theorem \ref{Explicit1}} \label{ProofThm}
Let $X$ be a fixed positive real. We write $s=\sigma + it$ with $\sigma, t \in \mathbb{R}$ and $\rho = \beta + i \gamma$ for a non-trivial zero of the Riemann zeta function $\zeta (s)$. Suppose $T>T_0$ and $T$ is not the imaginary part of the zeros of $\zeta (s)$ and further that $|T - \gamma| \gg \frac{1}{\log T}$ where $\gamma$ is the imaginary part of any zero $\rho$. This restriction on $T$ is harmless within our remainder terms.

Set $c = 1 + \frac{1}{\log T}$ and consider the integral
\begin{equation}
I = \frac{1}{2 \pi i} \int_R \frac{\xi '}{\xi} (s) \zeta^{(n)} (s) X^s \ ds \label{MainInt}
\end{equation}
where $\xi (s)$ is the Riemann xi function, $\zeta^{(n)} (s)$ denotes the $n$th derivative of $\zeta (s)$ and R denotes the rectangular positively oriented contour with vertices given by $c+i, c+iT, 1-c+it, 1-c+i$, connected in this order.

By Cauchy's Theorem,
\begin{equation}
I = \sum_{0 < \gamma \leq T}  \zeta^{(n)} (\rho) X^{\rho}. \label{Cauchy}
\end{equation}

We now need to evaluate $I$ in another way to obtain our asymptotic expansion. Decomposing the integral (\ref{MainInt}) along the sides of the contour as
\begin{align*}
I &= \frac{1}{2 \pi i} \left(\int_{1-c+i}^{c+i} + \int_{c+i}^{c+iT} + \int_{c+iT}^{1-c+iT} + \int_{1-c+iT}^{1-c+i}  \right) \frac{\xi '}{\xi} (s) \zeta^{(n)} (s) X^s \ ds \\
& = I_B + I_R + I_T + I_L.
\end{align*}

\subsection{Bounding $I_B$ and $I_T$} \hfill\\
Notice that by our choice of $T$ we may bound $I_B$ and $I_T$ trivially within our error. To do this, recall that for $-1 \leq \sigma \leq 2$ and with our general assumptions we have from Gonek \cite[Sect.2, p.127]{Gonek} that
\[
\frac{\xi '}{\xi} (s) \ll \log^2 T.
\]
By Lemma \ref{main S^T}, we have
\begin{equation}
I_B + I_T \ll \log^2 T \int_{1-c}^{c} | \zeta^{(n)} (\sigma + it) X^{\sigma + it} | \ d\sigma \ll \sqrt{T} \log^ {n+2} T \label{TopBot}.
\end{equation}

\subsection{Evaluating $I_R$} \hfill\\
Writing $s = c+ it$ and using (\ref{LogDerivXi1}) we may write
\begin{align*}
I_R & = \frac{1}{2 \pi} \int_{1}^{T} \frac{\xi '}{\xi} (c+it) \zeta^{(n)} (c+it) X^{c+it} \ dt \\
& = \frac{1}{2 \pi} \int_{1}^{T} \left( \frac{2(c+it) -1}{(c+it)(c+it-1)} - \frac{1}{2} \log \pi + \psi \left(\frac{c+it}{2} \right) + \frac{\zeta '}{\zeta} (c+it) \right) \zeta^{(n)} (c+it) X^{c+it} \ dt \\
\end{align*}
Using the Dirichlet series (\ref{ZetaDeriv}),(\ref{ZetaLogDeriv}) and Lemma \ref{LemmaLogDerivXi} we may rewrite this as
\begin{align*}
I_R & = \frac{(-1)^n}{2 \pi} \int_{1}^{T} \left( \frac{1}{2} \log \left(\frac{t}{2 \pi} \right) + \frac{\pi i}{4} - \sum_{r=1}^{\infty} \frac{\Lambda (r)}{r^{c+it}} + O \left(\frac{1}{t}\right) \right) \sum_{m=1}^{\infty} \frac{\log^n m}{m^{c+it}} X^{c+it} \ dt \\
& = \frac{(-1)^n}{2 \pi} X^c \sum_{m=1}^{\infty} \frac{\log^n m}{m^{c}}  \int_{1}^{T} \left( \frac{1}{2} \log \left(\frac{t}{2 \pi} \right) + \frac{\pi i}{4} + O \left(\frac{1}{t}\right) \right) \left( \frac{X}{m} \right)^{it} \ dt \\
&\qquad+ \frac{(-1)^{n+1}}{2 \pi} X^c \sum_{m=1}^{\infty} \frac{\log^n m}{m^{c}} \sum_{r=1}^{\infty} \frac{\Lambda (r)}{r^{c}} \int_{1}^{T} \left( \frac{X}{mr} \right)^{it} \ dt \\
& = I_{R,1} + I_{R,2}.
\end{align*}

Consider $I_{R,1}$ first. We have (with $\Delta (X)$ defined as in (\ref{Delta})),
\begin{align*}
     I_{R,1} &= \frac{(-1)^n}{2\pi} X^c \sum_{m=1}^{\infty} \frac{\log ^n m}{m^{c}} \int_{1}^{T} \left( \frac{1}{2} \log \left( \frac{t}{2 \pi} \right) \ + \frac{ \pi i}{4} + O \left( \frac{1}{t} \right) \right) \left( \frac{X}{m} \right)^{it} \ dt \\
     & = \frac{(-1)^n}{2\pi} \Delta (X) \log ^n X \int_{1}^{T} \left( \frac{1}{2} \log \left( \frac{t}{2 \pi} \right) \ + \frac{ \pi i}{4} \right) \ dt \\
     & \quad + \frac{(-1)^n}{2\pi} X^c \sum_{\substack{m=1 \\ m \neq X}}^{\infty} \frac{\log ^n m}{m^{c}} \int_{1}^{T} \left( \frac{1}{2} \log \left( \frac{t}{2 \pi} \right) \ + \frac{ \pi i}{4} \right) \left( \frac{X}{m} \right)^{it} \ dt \\
     & \qquad + O(\log ^{n+2} T) \\
     & = I_{R,1,1} + I_{R,1,2} + O(\log ^{n+2} T).
\end{align*}

Firstly,
\begin{equation*}
    I_{R,1,1} = (-1)^n \Delta (X) \log ^n X \frac{T}{2 \pi} \left( \frac{1}{2} \log \left( \frac{T}{2 \pi} \right) - \frac{1}{2} + \frac{ \pi i}{4} \right) + O(1).
\end{equation*}

Next, integrating by parts and summing we obtain
\begin{align*}
    I_{R,1,2} &\ll
    X^c \sum_{\substack{m=1 \\ m \neq X}}^{\infty} \frac{\log ^n m}{m^{c}} \frac{\log T}{\log \left| \frac{X}{m} \right|}  \ll \log ^{n+2} T
\end{align*}

If $0<X<1$ we can do slightly better than this error term. Recombining, we have
\begin{equation}
    I_{R,1} = (-1)^n \Delta (X) \log ^n X \frac{T}{2 \pi} \left( \frac{1}{2} \log \left( \frac{T}{2 \pi} \right) - \frac{1}{2} + \frac{ \pi i}{4} \right) + O(\log ^{n+2} T). \label{I{R,1}}
\end{equation}

Now consider $I_{R,2}$. We have
\begin{align*}
   I_{R,2} & = \frac{(-1)^{n+1}}{2\pi} X^c \sum_{m=1}^{\infty} \frac{\log ^n m}{m^{c}} \sum_{r=1}^{\infty} \frac{\Lambda (r)}{r^{c}} \int_{1}^{T} \left( \frac{X}{mr} \right)^{it} \ dt \\
   & =  \frac{(-1)^{n+1}}{2\pi} X^c \sum_{k=1}^{\infty} \frac{1}{k^{c}} \sum_{k=mr} \log ^n m \ \Lambda (r) \int_{1}^{T} \left( \frac{X}{k} \right) ^{it} \ dt \\
   & = \frac{(-1)^{n+1}}{2\pi} \Delta (X) \sum_{X=mr} \log ^n m \ \Lambda (r) \int_{1}^{T} 1 \ dt \\
   & \quad + \frac{(-1)^{n+1}}{2\pi} X^c \sum_{\substack{k=1 \\ k \neq X}}^{\infty} \frac{1}{k^c} \sum_{k=mr} \log ^n m \ \Lambda (r) \int_{1}^{T} \left( \frac{X}{k} \right)^{it} \ dt \\
   & = I_{R,2,1} + I_{R,2,2}.
\end{align*}
Then
\begin{equation*}
    I_{R,2,1} = (-1)^{n+1} \Delta (X) \frac{T}{2 \pi} \sum_{X=mr} \log ^n m \ \Lambda (r) + O(1)
\end{equation*}
and as with the case for $I_{R,1,2}$ above, we have
\begin{align*}
    I_{R,2,2} &\ll
    X^c \sum_{\substack{k=1 \\ k \neq X}}^{\infty} \frac{1}{k^{c}} \sum_{k=mr} \log ^n m \ \Lambda (r) \frac{1}{\log |X/k|}  \ll \log ^{n+2} T
\end{align*}
where again the error can be improved slightly for $0<X<1$. Combining, we have
\begin{equation}
    I_{R,2} = (-1)^{n+1} \Delta (X) \frac{T}{2 \pi} \sum_{X=mr} \log ^n m \ \Lambda (r) + O( \log^ {n+2} T). \label{I{R,2}}
\end{equation}
Finally, we may combine (\ref{I{R,1}}) and (\ref{I{R,2}}) to obtain $I_R$, given by
\begin{equation}
    I_R = (-1)^n \Delta (X) \frac{T}{2 \pi} \left( \log^n X \left\{ \frac{1}{2} \log \left( \frac{T}{2 \pi} \right) - \frac{1}{2} + \frac{ \pi i}{4} \right\} - \sum_{X=mr} \log ^n m \ \Lambda (r) \right) + O (\log ^{n+2} T). \label{Right}
\end{equation}

\subsection{Evaluating $I_L$} \hfill\\
Finally we evaluate $I_L$, which is where most of the contribution to the asymptotic expansion comes from. Using Lemma \ref{functional equation zeta (s)} we have
\begin{align*}
I_L & = -\frac{1}{2 \pi i} \int_{1-c+i}^{1-c+iT}  \frac{\xi '}{\xi} (s) \zeta^{(n)} (s) X^s \ ds \\
& = -\frac{1}{2 \pi i} \int_{1-c+i}^{1-c+iT} \left( - \frac{\xi '}{\xi} (1-s) \right) \left( \frac{1}{\chi (1-s)} \sum_{k=0}^{n} {n \choose k} (-1)^k \frac{\chi^{(n-k)} (s)}{\chi(s)} \zeta^{(k)} (1-s) \right)  X^s \ ds \\
& = -\frac{1}{2 \pi} \int_{1}^{T} \left( - \frac{\xi '}{\xi} (c-it) \right)  \left( \frac{1}{\chi (c-it)} \sum_{k=0}^{n} {n \choose k} (-1)^k \frac{\chi^{(n-k)} (1-c-it)}{\chi(1-c-it)} \zeta^{(k)} (c-it) \right)  X^{1-c+it} \ dt.
\end{align*}
By complex conjugation we get
\begin{align*}
\overline{I_L} & = \frac{X^{1-c}}{2 \pi} \int_{1}^{T} \frac{\xi '}{\xi} (c+it) \left( \frac{1}{\chi (c+it)} \sum_{k=0}^{n} {n \choose k} (-1)^k \frac{\chi^{(n-k)} (1-c+it)}{\chi(1-c+it)} \zeta^{(k)} (c+it) \right)  X^{-it} \ dt \\
& = \frac{X^{1-c}}{2 \pi} \int_{1}^{T} \left\{ \frac{1}{2} \log \left( \frac{t}{2 \pi} \right) + \frac{\pi i}{4} - \sum_{r=1}^{\infty} \frac{\Lambda (r)}{r^{c+it}} + O\left( \frac{1}{t} \right) \right\} \\
&\qquad \left( \chi (1-c-it) \sum_{k=0}^{n} {n \choose k} (-1)^k \left( (-1)^{n-k} \log^{n-k} \left( \frac{t}{2 \pi} \right) + O \left( \frac{1}{t} \right) \right) (-1)^k \sum_{m=1}^{\infty} \frac{\log^k m}{m^{c+it}} \right)  X^{-it} \ dt \\ 
& = \frac{X^{1-c}}{2 \pi} \int_{1}^{T} \left\{ \frac{1}{2} \log \left( \frac{t}{2 \pi} \right) + \frac{\pi i}{4} - \sum_{r=1}^{\infty} \frac{\Lambda (r)}{r^{c+it}} \right\} \\
&\qquad \left( \chi (1-c-it) \sum_{k=0}^{n} {n \choose k} (-1)^{n+k} \log^{n-k} \left( \frac{t}{2 \pi} \right) \sum_{m=1}^{\infty} \frac{\log^k m}{m^{c+it}} \right)  e^{-it \log X} \ dt + O(\log^{n+2} T) \\ 
\end{align*}
where the second line follows from Lemma \ref{LemmaLogDerivXi}.

We now split this integral and evaluate each part separately.
\begin{align*}
\overline{I_L} & = \frac{X^{1-c}}{2 \pi} \frac{1}{2} \int_{1}^{T} \chi (1-c-it) \sum_{k=0}^{n} {n \choose k} (-1)^{n+k} \log^{n-k+1} \left( \frac{t}{2 \pi} \right) \sum_{m=1}^{\infty} \frac{\log^k m}{m^{c+it}} e^{-it \log X} \ dt \\
& + \frac{X^{1-c}}{2 \pi} \frac{\pi i}{4}  \int_{1}^{T} \chi (1-c-it) \sum_{k=0}^{n} {n \choose k} (-1)^{n+k} \log^{n-k} \left( \frac{t}{2 \pi} \right) \sum_{m=1}^{\infty} \frac{\log^k m}{m^{c+it}} e^{-it \log X} \ dt \\
& + \frac{X^{1-c}}{2 \pi}  \int_{1}^{T} \chi (1-c-it) \sum_{r=1}^{\infty} \frac{\Lambda (r)}{r^{c+it}}  \sum_{k=0}^{n} {n \choose k} (-1)^{n+k+1} \log^{n-k} \left( \frac{t}{2 \pi} \right) \sum_{m=1}^{\infty} \frac{\log^k m}{m^{c+it}} e^{-it \log X} \ dt \\
& + O(\log^{n+2} T) \\
& = J_1 + J_2 + J_3 + O(\log^{n+2} T).
\end{align*}

The key component to this section of the proof here is the method of stationary phase. Applications of this method to these types of problems can be in Gonek \cite[Sect.4, p.131]{Gonek}, in Levinson \cite{Levinson} and in Jakhlouti and Mazhouda \cite[Sect.2, p.13]{JakMaz}, amongst other places.

In an entirely analogous way to the proof that Gonek writes in \cite{Gonek}, we are able to prove the following result which we need to evaluate the integrals $J_1, J_2, J_3$.

\begin{lemma} \label{Gonek 5}
Let $X$ be a fixed positive real. Let $\{b_m\}_{m=1}^{\infty}$ be a sequence of complex numbers such that for any $\epsilon >0, b_m \ll m^\epsilon$. Let $c>1$ and let $k \geq 0$ be an integer. Then for $T$ sufficiently large, we have
\begin{align*}
    \frac{1}{2 \pi} & \int_{1}^{T} \chi (1-c-it) \left( \sum_{m=1}^{\infty} b_m m^{-c-it} \right) \log ^k \left( \frac{t}{2 \pi} \right) e^{-it \log X} \ dt \\
    & = X^c \sum_{1 \leq m \leq \frac{T}{2 \pi X}} b_m \log ^k (mX) \ e^{-2 \pi i m X} + O \left( T^{c - \frac{1}{2}} \log ^k T \right).
\end{align*}
\end{lemma}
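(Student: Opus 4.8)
The plan is to follow Gonek's stationary phase argument in \cite{Gonek}, the only genuinely new features being the general coefficients $b_m$ and the slowly varying amplitude $\log^k(t/2\pi)$, which is carried through the computation and evaluated at the stationary point. Since $c>1$ and $b_m \ll m^\epsilon$, the Dirichlet series converges absolutely and uniformly over $t\in[1,T]$, so I would first interchange summation and integration. Combining $m^{-it}e^{-it\log X}=e^{-it\log(mX)}$, the left-hand side becomes $\sum_{m\ge 1} b_m m^{-c} I_m$ where
\[
I_m := \frac{1}{2\pi}\int_1^T \chi(1-c-it)\,\log^k\!\left(\frac{t}{2\pi}\right) e^{-it\log(mX)}\,dt.
\]

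Next I would insert the standard asymptotic for $\chi$ coming from the functional equation and Stirling's formula; using $\overline{\chi(s)}=\chi(\bar s)$ this gives
\[
\chi(1-c-it) = \left(\frac{t}{2\pi}\right)^{c-\frac12} e^{-i\pi/4}\exp\!\left(i\Big[t\log\tfrac{t}{2\pi}-t\Big]\right)\left(1+O(1/t)\right),
\]
so that $I_m$ is an exponential integral with amplitude $g(t)=(t/2\pi)^{c-1/2}\log^k(t/2\pi)$ and phase $\phi(t)=t\log\frac{t}{2\pi}-t-t\log(mX)$. The crucial observation is that $\phi'(t)=\log\frac{t}{2\pi mX}$ vanishes precisely at $t_m=2\pi mX$, where $\phi''(t_m)=1/(2\pi mX)>0$ and $\phi(t_m)=-2\pi mX$; this stationary point lies in $[1,T]$ exactly when $m$ is (up to $O(1)$ endpoint terms) in the summation range $m\le T/(2\pi X)$.

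For those $m$ I would apply a stationary phase lemma with explicit error terms, as in Titchmarsh's exponential integral lemmas used by Gonek. The main term is $g(t_m)\sqrt{2\pi/\phi''(t_m)}\,e^{i(\phi(t_m)+\pi/4)}$ with $\sqrt{2\pi/\phi''(t_m)}=2\pi(mX)^{1/2}$, and a short computation shows that the phase factors $e^{-i\pi/4}$ and $e^{i\pi/4}$ cancel, that $\log^k(t_m/2\pi)=\log^k(mX)$, and that the algebraic factors collapse via $m^{-c}(mX)^{c-1/2}\cdot 2\pi(mX)^{1/2}/(2\pi)=X^c$, leaving precisely $X^c b_m \log^k(mX)\,e^{-2\pi imX}$ per term and hence the stated main sum. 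For the remaining $m$, with no stationary point in $[1,T]$, I would estimate $I_m$ by the first-derivative test, bounding it by $g$ at the nearer endpoint divided by $\min_{[1,T]}|\phi'|$.

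The main obstacle is the uniform control and summability of the error. The boundary contribution scales like $g(T)\asymp T^{c-1/2}\log^k T$, which is the source of the stated error term, and the bulk of the off-diagonal terms $m>T/(2\pi X)$ contribute $\ll m^{-c+\epsilon}\,T^{c-1/2}\log^k T\,/\,\log(2\pi mX/T)$, whose tail is summable because $c>1$. The genuinely delicate part is the transition region, where $t_m$ is within $O(\sqrt{t_m})$ of an endpoint (equivalently $m$ within $O(\sqrt{T})$ of $T/(2\pi X)$): here $|\phi'|$ is too small for the first-derivative test, and one must instead invoke the second-derivative (van der Corput) bound $\phi''\asymp 1/t$, or fold these few nearest terms into the stationary phase estimate itself. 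Checking that all of this aggregates to $O(T^{c-1/2}\log^k T)$ is the crux of the bookkeeping, but it runs exactly parallel to Gonek's treatment, with the amplitude $\log^k(t/2\pi)$ contributing only the harmless extra factor $\log^k T$.
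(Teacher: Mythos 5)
Your proposal is correct and follows exactly the route the paper intends: the paper gives no proof of this lemma beyond the remark that it follows ``in an entirely analogous way'' from Gonek's stationary-phase proof of his Lemma 5 in \cite{Gonek}, which is precisely the argument you reconstruct. Your $\chi$-asymptotic, the stationary point $t_m = 2\pi m X$ with $\phi(t_m) = -2\pi m X$, the collapse of the amplitude factors to $X^c b_m \log^k(mX)\, e^{-2\pi i m X}$, and the error bookkeeping (endpoint, off-diagonal, and transition-region terms) all match Gonek's treatment, so your sketch is in fact more detailed than what the paper itself provides.
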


Applying Lemma \ref{Gonek 5} to each of the summations $J_k$, $k=1,2,3$, we have
\begin{align*}
J_1 & = \frac{X^{1-c}}{2 \pi} \frac{1}{2}  \sum_{k=0}^{n} {n \choose k} (-1)^{n+k} \sum_{m=1}^{\infty} \frac{\log^k m}{m^c}  \int_{1}^{T}  \chi (1-c-it)  \log^{n-k+1} \left( \frac{t}{2 \pi} \right) e^{-it \log (mX)} \ dt \\
& = \frac{X}{2}  \sum_{k=0}^{n} {n \choose k} (-1)^{n+k} \sum_{m \leq \frac{T}{2 \pi X}} \log^k (m) \log^{n-k+1} (mX) \ e^{-2 \pi i mX} + O(\sqrt{T} \log^{n+2} T) \\
& = \frac{X}{2}  \sum_{m \leq \frac{T}{2 \pi X}} e^{-2 \pi i mX} \left[ \sum_{k=0}^{n} {n \choose k} (-1)^{n-k} \log^k (m) \log^{n-k} (mX) \right] (\log (mX)) + O(\sqrt{T} \log^{n+2} T) \\
& =  \frac{X}{2}  \sum_{m \leq \frac{T}{2 \pi X}} e^{-2 \pi i mX} \log (mX) \left[ -\log (mX) + \log m \right]^n + O(\sqrt{T} \log^{n+2} T) \\
& = (-1)^n \frac{X}{2}  \sum_{m \leq \frac{T}{2 \pi X}} e^{-2 \pi i mX} \log (mX) \log^n X + O(\sqrt{T} \log^{n+2} T) \\
& = (-1)^n \frac{X}{2}  \log^{n+1} X \sum_{m \leq \frac{T}{2 \pi X}} e^{-2 \pi i mX} + (-1)^n \frac{X}{2}  \log^n X \sum_{m \leq \frac{T}{2 \pi X}} e^{-2 \pi i mX} \log m + O(\sqrt{T} \log^{n+2} T).
\end{align*}

Similarly,
\begin{align*}
J_2 & = \frac{X^{1-c}}{2 \pi} \frac{\pi i}{4}  \sum_{k=0}^{n} {n \choose k} (-1)^{n+k} \sum_{m=1}^{\infty} \frac{\log^k m}{m^c}  \int_{1}^{T}  \chi (1-c-it)  \log^{n-k} \left( \frac{t}{2 \pi} \right) e^{-it \log (mX)} \ dt \\
& = \frac{\pi i}{4} X \sum_{k=0}^{n} {n \choose k} (-1)^{n+k} \sum_{m \leq \frac{T}{2 \pi X}} \log^k (m) \log^{n-k} (mX) \ e^{-2 \pi i mX} + O(\sqrt{T} \log^{n+1} T) \\
& = \frac{\pi i}{4} X \sum_{m \leq \frac{T}{2 \pi X}} e^{-2 \pi i mX} \left[ \sum_{k=0}^{n} {n \choose k} (-1)^{n+k} \log^k (m) \log^{n-k} (mX) \right] + O(\sqrt{T} \log^{n+1} T) \\
& = \frac{\pi i}{4} X \sum_{m \leq \frac{T}{2 \pi X}} e^{-2 \pi i mX} \left[ \log (m) - \log (mX) \right]^n + O(\sqrt{T} \log^{n+1} T) \\
& = (-1)^n \frac{\pi i}{4} X \log^n X \sum_{m \leq \frac{T}{2 \pi X}} e^{-2 \pi i mX} + O(\sqrt{T} \log^{n+1} T).
\end{align*}

Finally,
\begin{align*}
J_3 & = \frac{X^{1-c}}{2 \pi} \sum_{k=0}^{n} {n \choose k} (-1)^{n+k+1} \sum_{r=1}^{\infty} \frac{\Lambda (r)}{r^c} \sum_{m=1}^{\infty} \frac{\log^k m}{m^c}  \int_{1}^{T}  \chi (1-c-it)  \log^{n-k} \left( \frac{t}{2 \pi} \right) e^{-it \log (mrX)} \ dt \\
& = X \sum_{k=0}^{n} {n \choose k} (-1)^{n+k+1} \sum_{mr \leq \frac{T}{2 \pi X}} \Lambda (r) \log^k (m) \log^{n-k} (mrX) \ e^{-2 \pi i mrX} + O(\sqrt{T} \log^{n+2} T) \\
& = - X \sum_{mr \leq \frac{T}{2 \pi X}}  e^{-2 \pi i mrX} \Lambda (r) \left[\sum_{k=0}^{n} {n \choose k} (-1)^{n+k}  \log^k (m) \log^{n-k} (mrX) \right] + O(\sqrt{T} \log^{n+2} T) \\
& = - X \sum_{mr \leq \frac{T}{2 \pi X}} e^{-2 \pi i mrX} \Lambda (r)  \left[ -\log(mrX) + \log m \right]^n + O(\sqrt{T} \log^{n+2} T) \\
& = (-1)^{n+1} X \sum_{mr \leq \frac{T}{2 \pi X}} e^{-2 \pi i mrX} \Lambda (r) \log^n (rX) + O(\sqrt{T} \log^{n+2} T).
\end{align*}

Recombining these, we have
\begin{align*}
\overline{I_L} & = (-1)^n \left\{ X \log^n X \left[ \frac{1}{2} \log X + \frac{\pi i}{4} \right] \sum_{m \leq \frac{T}{2 \pi X}} e^{-2 \pi i mX} + \frac{1}{2}X \log^n X \sum_{m \leq \frac{T}{2 \pi X}} e^{-2 \pi i mX} \log m \right. \\
&\qquad \qquad - X \sum_{mr \leq \frac{T}{2 \pi X}} e^{-2 \pi i mrX} \Lambda (r) \log^n (rX) \Bigg\} + O(\sqrt{T} \log^{n+2} T).
\end{align*}
Taking complex conjugates,
\begin{align}
I_L & = (-1)^n \left\{ X \log^n X \left[ \frac{1}{2} \log X - \frac{\pi i}{4} \right] \sum_{m \leq \frac{T}{2 \pi X}} e^{2 \pi i mX} + \frac{X}{2} \log^n X \sum_{m \leq \frac{T}{2 \pi X}} e^{2 \pi i mX} \log m \right. \nonumber \\
&\qquad \qquad - X \sum_{mr \leq \frac{T}{2 \pi X}} e^{2 \pi i mrX} \Lambda (r) \log^n (rX) \Bigg\} + O(\sqrt{T} \log^{n+2} T). \label{Left}
\end{align}

\subsection{Finalising the Proof of Theorem \ref{Explicit1}} \hfill\\
Combining (\ref{TopBot}), (\ref{Right}), and (\ref{Left}) gives $I$ in the second way that we are looking for. Specifically, we have
\begin{align*}
I =  (-1)^n &\left\{  \Delta(X) \frac{T}{2 \pi} \left( \log ^n X \left( \frac{1}{2} \log \left( \frac{T}{2 \pi} \right) - \frac{1}{2} + \frac{\pi i}{4} \right) - \sum_{mr=X} \Lambda (r) \log^n m  \right) \right.\\
&+ X \log^n X \left(\frac{1}{2} \log X - \frac{\pi i}{4} \right) \sum_{m \leq \frac{T}{2 \pi X}} e^{2 \pi i m X} \\
&+ \frac{X}{2} \log^n X \sum_{m \leq \frac{T}{2 \pi X}} e^{2 \pi i m X} \log m  \\
&- X \sum_{mr \leq \frac{T}{2 \pi X}}  e^{2 \pi i mr X} \Lambda (r) \log^n (rX) \Bigg\} \\
&+ O(\sqrt{T} \log^{n+2} T),
\end{align*}
Combining this asymptotic expansion with our observation that $I = \sum_{0 < \gamma \leq T}  \zeta^{(n)} (\rho) X^{\rho}$ in (\ref{Cauchy}) completes the proof of Theorem \ref{Explicit1}.

\section{Proof of Corollary \ref{IntegerCase}} \label{ProofCor}

We may rewrite Theorem \ref{Explicit1} in a slightly different way as follows.
\begin{corollary} \label{Explicit2}
For $X$ a fixed positive real number, we have
\begin{multline*}
\sum_{0< \gamma \leq T} \zeta^{(\lowercase{n})} (\rho)X^{\rho}  = \\(-1)^n \left\{  \Delta(X) \frac{T}{2 \pi} \left( \log ^n X \left( \frac{1}{2} \log \left( \frac{T}{2 \pi} \right) - \frac{1}{2} + \frac{\pi i}{4} \right) - \sum_{mr=X} \Lambda (r) \log^n m  \right) \right.\\
+ X \log^n X \left(\frac{1}{2} \log X - \frac{\pi i}{4} \right) \sum_{m \leq \frac{T}{2 \pi X}} e^{2 \pi i m X} + \frac{X}{2} \log^n X \sum_{m \leq \frac{T}{2 \pi X}} e^{2 \pi i m X} \log m  \\
- X \sum_{k=0}^{n} {n \choose k} \log^{n-k} X \left( \sum_{mr \leq \frac{T}{2 \pi X}} e^{2 \pi i mr X} \Lambda (r) \log^k r \right) \Bigg\} + O(\sqrt{T} \log^{n+2} T),
\end{multline*}
where
$\Delta (X)$ is given in Theorem \ref{Explicit1}.
\end{corollary}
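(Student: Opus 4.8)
The plan is to observe that Corollary~\ref{Explicit2} differs from Theorem~\ref{Explicit1} in exactly one place: the final double sum. Every other term on the right-hand side, together with the error term $O(\sqrt{T} \log^{n+2} T)$, is identical in the two statements. Hence it suffices to start from the conclusion of Theorem~\ref{Explicit1} (which we may invoke, since $X$ is a fixed positive real) and rewrite the single quantity
\[
- X \sum_{mr \leq \frac{T}{2 \pi X}} e^{2 \pi i mr X} \Lambda (r) \log^n (rX)
\]
into the claimed form, leaving the rest of the expansion untouched.

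The key step is a binomial expansion of the factor $\log^n(rX)$. Writing $\log(rX) = \log r + \log X$, we have
\[
\log^n(rX) = (\log r + \log X)^n = \sum_{k=0}^{n} \binom{n}{k} \log^k r \, \log^{n-k} X.
\]
Substituting this identity into the summand and then interchanging the finite sum over $k$ with the sum over the lattice points $mr \leq \tfrac{T}{2\pi X}$ — which is permissible because the $k$-sum has only the $n+1$ terms $k=0,\dots,n$ — gives
\[
- X \sum_{k=0}^{n} \binom{n}{k} \log^{n-k} X \left( \sum_{mr \leq \frac{T}{2 \pi X}} e^{2 \pi i mr X} \Lambda (r) \log^k r \right),
\]
which is precisely the final term recorded in Corollary~\ref{Explicit2}.

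There is no substantive obstacle here, as the manipulation is purely algebraic: the analytic content has already been established in the proof of Theorem~\ref{Explicit1}, and nothing in the error analysis is affected by regrouping a fixed, finite sum. The only point demanding any care is the bookkeeping of the binomial coefficients and the powers of $\log X$, to confirm that after reindexing the expression matches the stated form term by term; once this is checked, substituting back into Theorem~\ref{Explicit1} completes the proof.
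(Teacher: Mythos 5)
Your proposal is correct and is exactly the paper's own argument: the corollary is deduced from Theorem~\ref{Explicit1} by expanding $\log^n(rX) = (\log r + \log X)^n$ binomially in the final sum and regrouping the resulting finite sum over $k$, with all other terms and the error term left unchanged.
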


\begin{proof}
This follows from Theorem \ref{Explicit1} by using the binomial expansion on $\log^{n} (rX)$ in the last summation in the braces in Theorem \ref{Explicit1}.
\end{proof}

\begin{remark}
The advantage to rewriting Theorem \ref{Explicit1} in the form of Corollary \ref{Explicit2} is that none of the summations involving exponentials have any reliance on powers of $\log X$.
\end{remark}

When $X \geq 1$ and $X \in \mathbb{Z}$, notice that
\[
\sum_{m \leq \frac{T}{2 \pi X}} e^{2 \pi i m X} = \frac{T}{2 \pi X} + O(1),
\]
\[
\sum_{m \leq \frac{T}{2 \pi X}} e^{2 \pi i m X} \log m = \frac{T}{2 \pi X} \log \left( \frac{T}{2 \pi X} \right) - \frac{T}{2 \pi X} + O(\log T)
\]
\[
\sum_{mr \leq \frac{T}{2 \pi X}} e^{2 \pi i mr X} \Lambda (r) \log^k r = \sum_{mr \leq \frac{T}{2 \pi X}} \Lambda (r) \log^k r.
\]
Define
\begin{equation}
S = (-1)^{k+1} \sum_{mr \leq \frac{T}{2 \pi X}} \Lambda (r) \log^k r \label{EquS}
\end{equation}
Then in an entirely analogous way to that done in Hughes and Pearce-Crump \cite{HPC}, we have
\begin{align*}
S & = (-1)^{k+1} \frac{1}{k+1} \frac{T}{2 \pi X} \log^{k+1} \left(\frac{T}{2 \pi X} \right) \\
& + (-1)^{k+1} \sum_{l=0}^{k} {k \choose l} (-1)^l l! \left( -1 + \sum_{j=0}^{l} (-1)^j C_j \right) \frac{T}{2 \pi X} \log^{k-l} \left( \frac{T}{2 \pi X} \right) + k! A_k \frac{T}{2 \pi X} + E_k(T)
\end{align*}
where
\[
E_k(T) =
\begin{cases}
O \left(T  e^{-C \sqrt{\log T}} \right) &\text{ unconditionally}\\
O \left(\sqrt{T} \log^{k+2} T \right) &\text{ under RH}\\
      \end{cases}
\]
and $C>0$ is a constant. Multiplying $S$ by $(-1)^{k+1}$ gives the summation that we were originally looking for.

Substituting these expressions into the asymptotic expansion from Corollary \ref{Explicit2} gives
\begin{align*}
\sum_{0< \gamma \leq T} \zeta^{(\lowercase{n})} (\rho)X^{\rho}  = (-1)^n &\left\{ \frac{T}{2 \pi} \left( \log ^n X \left( \frac{1}{2} \log \left( \frac{T}{2 \pi} \right) - \frac{1}{2} + \frac{\pi i}{4} \right) - \sum_{mr=X} \Lambda (r) \log^n m  \right) \right.\\
&+  \log^n X \left(\frac{1}{2} \log X - \frac{\pi i}{4} \right)  \frac{T}{2 \pi} \\
&+ \frac{1}{2} \log^n X \frac{T}{2 \pi} \log \left( \frac{T}{2 \pi X} \right) - \frac{T}{2 \pi} \\
&-  \sum_{k=0}^{n} {n \choose k} \log^{n-k} X \left( \frac{1}{k+1} \frac{T}{2 \pi} \log^{k+1} \left(\frac{T}{2 \pi X} \right) \right. \\
& + \sum_{l=0}^{k} {k \choose l} (-1)^l l! \left( -1 + \sum_{j=0}^{l} (-1)^j C_j \right) \frac{T}{2 \pi} \log^{k-l} \left( \frac{T}{2 \pi X} \right) \\
& + (-1)^{k+1} k! A_k \frac{T}{2 \pi} \Bigg) \Bigg\} + E_n(T),
\end{align*}
where
$E_n(T)$, $C_j$ and $A_k$ are defined in the statement of Corollary \ref{IntegerCase}.

We want to simplify this so that the $\log \left(\frac{T}{2 \pi X} \right)$ terms can be written separately in terms of $\log \left(\frac{T}{2 \pi} \right)$ and $\log X$. To do this, we begin by writing this previous expansion in a slightly different form and do some simplifying.
\begin{align*}
\sum_{0< \gamma \leq T} \zeta^{(\lowercase{n})} (\rho)X^{\rho} & = (-1)^n \frac{T}{2 \pi} \log ^n X \left( \log \left( \frac{T}{2 \pi} \right) -1 - \sum_{mr=X} \Lambda (r) \log^n m \right) \\
& + (-1)^{n+1} \sum_{k=0}^{n} {n \choose k} \log ^{n-k} X \Bigg\{ \frac{1}{k+1} \frac{T}{2 \pi X} \left( \log \left( \frac{T}{2 \pi} \right) - \log X \right)^{k+1} \\
&\qquad + \sum_{l=0}^{k} {k \choose l} (-1)^l l! \left( -1 + \sum_{j=0}^{l} (-1)^j C_j \right) \frac{T}{2 \pi} \left( \log \left( \frac{T}{2 \pi} \right) - \log X \right)^{k-l} \\
&\qquad + (-1)^{k+1} k! A_k \frac{T}{2 \pi}  \Bigg\} + E_n(T).
\end{align*}

Applying the binomial theorem, we have
\begin{align*}
\sum_{0< \gamma \leq T}& \zeta^{(\lowercase{n})} (\rho) X^{\rho}  = (-1)^n \frac{T}{2 \pi} \log ^n X \left( \log \left( \frac{T}{2 \pi} \right) -1 - \sum_{mr=X} \Lambda (r) \log^n m \right) \\
& + (-1)^{n+1} \frac{T}{2 \pi} \Bigg\{ \sum_{k=0}^{n} \sum_{u=0}^{k+1} {n \choose k} {{k+1} \choose u} (-1)^u \frac{1}{k+1}  \log^{k+1-u} \left( \frac{T}{2 \pi} \right)\log^{n-k+u} X \\
& + \sum_{k=0}^{n}  \sum_{l=0}^{k} \sum_{u=0}^{k-l}  {n \choose k} {k \choose l} {{k-l} \choose u} (-1)^l l! \left( -1 + \sum_{j=0}^{l} (-1)^j C_j \right) \log^{k-l-u} \left( \frac{T}{2 \pi} \right) \log^{n-k+u} X \\
& + \sum_{k=0}^{n} {n \choose k} (-1)^{k+1} k! A_k \log ^{n-k} X \Bigg\} + E_n(T).
\end{align*}

Finally, we may rearrange slightly to complete the proof of Corollary \ref{IntegerCase} to give
\begin{align*}
\sum_{0< \gamma \leq T}& \zeta^{(\lowercase{n})} (\rho) X^{\rho}  = (-1)^{n+1} \frac{T}{2 \pi} \log ^n X \Bigg\{  \sum_{k=0}^{n} \sum_{u=0}^{k+1} {n \choose k} {{k+1} \choose u} (-1)^u \frac{1}{k+1}  \log^{k+1-u} \left( \frac{T}{2 \pi} \right)\log^{u-k} X \\
& + \sum_{k=0}^{n}  \sum_{l=0}^{k} \sum_{u=0}^{k-l}  {n \choose k} {k \choose l} {{k-l} \choose u} (-1)^l l! \left( -1 + \sum_{j=0}^{l} (-1)^j C_j \right) \log^{k-l-u} \left( \frac{T}{2 \pi} \right) \log^{u-k} X \\
& + \sum_{k=0}^{n} {n \choose k} (-1)^{k+1} k! A_k \log ^{-k} X - \left( \log \left( \frac{T}{2 \pi} \right) -1 - \sum_{mr=X} \Lambda (r) \log^n m \right) \Bigg\} + E_n(T).
\end{align*}

\section*{\textbf{Acknowledgements}}
The author is grateful to his supervisor, Dr. Christopher Hughes, for all of his help and guidance when writing this paper, as well as introducing him to the original paper that inspired this generalisation.

\begin{remark}
This paper will form part of the author's PhD thesis at the University of York.
\end{remark}

\addcontentsline{toc}{chapter}{Bibliography}

\end{document}